\newtheorem{thm}{Theorem}
\newtheorem{lem}{Lemma}          
\newtheorem{defn}{Definition}          
\begin{document}
\AmS -\TeX
\title[Constructive proof of Brouwer's fixed point theorem]{Constructive proof of Brouwer's fixed point theorem for sequentially locally non-constant and uniformly sequentially continuous functions by Sperner's lemma}

\author{Yasuhito Tanaka}
\thanks{This work was supported in part by the Ministry of Education, Science, Sports and Culture of Japan, Grant-in-Aid for Scientific Research (C), 20530165, and the Special Costs for Graduate Schools of the Special Expenses for Hitech Promotion by the Ministry of Education, Science, Sports and Culture of Japan in 2010.}\address{Faculty of Economics, Doshisha University, Kamigyo-ku, Kyoto, 602-8580, Japan}
\email{yasuhito@mail.doshisha.ac.jp}

\subjclass[2000]{Primary~26E40, Secondary~03F65}
\keywords{Brouwer's fixed point theorem, sequentially locally non-constant functions, uniformly sequentially continuous functions}

\begin{abstract}
In this paper using Sperner's lemma for modified partition of a simplex we will constructively prove Brouwer's fixed point theorem for sequentially locally non-constant and uniformly sequentially continuous functions. We follow the Bishop style constructive mathematics according to \cite{bb}, \cite{br} and \cite{bv}.
\end{abstract}

\maketitle

\section{Introduction}
It is well known that Brouwer's fixed point theorem can not be constructively proved in general case. Sperner's lemma which is used to prove Brouwer's theorem, however, can be constructively proved. Some authors, for example \cite{da} and \cite{veld}, have presented a constructive (or an approximate) version of Brouwer's theorem using Sperner's lemma. Thus, Brouwer's fixed point theorem can be constructively proved in its constructive version. Also Dalen in \cite{da} states a conjecture that a uniformly continuous function $f$ from a simplex to itself, with property that each open set contains a point $x$ such that $x\neq f(x)$ and also for every point $x$ on the faces of the simplex $x\neq f(x)$, has an exact fixed point. We call such a property \emph{local non-constancy}. Further we define a stronger property \emph{sequential local non-constancy}. In another paper \cite{ta1} we have constructively proved Dalen's conjecture with sequential local non-constancy. But in that paper as with \cite{da} and \cite{veld} we assume uniform continuity of functions. We consider a weaker uniform sequential continuity of functions according to \cite{br2}. In classical mathematics uniform continuity and uniform sequential continuity are equivalent. In constructive mathematics a la Bishop, however, uniform sequential continuity is weaker than uniform continuity\footnote{Also in constructive mathematics sequential continuity is weaker than continuity, and uniform continuity (respectively, uniform sequential continuity) is stronger than continuity (respectively, sequential continuity) even in a compact space. See, for example, \cite{ishi}. As stated in \cite{bm} all proofs of the equivalence between continuity and sequential continuity involve the law of excluded middle, and so the equivalence of them is non-constructive.} In this paper using Sperner's lemma for a modified partition of a simplex we will constructively prove Dalen's conjecture for sequentially locally non-constant and uniformly sequentially continuous functions.

In the next section we prove a modified version of Sperner's lemma. In Section 3 we present a proof of Brouwer's fixed point theorem for sequentially locally non-constant and uniformly sequentially continuous functions by the modified version of Sperner's lemma. We follow the Bishop style constructive mathematics according to \cite{bb}, \cite{br} and \cite{bv}.

\section{Sperner's lemma}\label{sec2}
To prove Sperner's lemma we use the following simple result of graph theory, Handshaking lemma\footnote{For another constructive proof of Sperner's lemma, see \cite{su}. }. A \emph{graph} refers to a collection of vertices and a collection of edges that connect pairs of vertices. Each graph may be undirected or directed. Figure \ref{graph} is an example of an undirected graph. Degree of a vertex of a graph is defined to be the number of edges incident to the vertex, with loops counted twice. Each vertex has odd degree or even degree. Let $v$ denote a vertex and $V$ denote the set of all vertices.
\begin{lem}[Handshaking lemma]
Every undirected graph contains an even number of vertices of odd degree. That is, the number of vertices that have an odd number of incident edges must be even.
\end{lem}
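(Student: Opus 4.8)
The plan is to count incidences between vertices and edges in two different ways, and then reduce modulo $2$. Form the sum $S = \sum_{v \in V} \deg(v)$, where each loop at a vertex contributes $2$ to that vertex's degree. First I would observe that every edge, being incident to exactly two vertex-slots (either two distinct endpoints, or the same vertex counted twice if it is a loop), contributes exactly $2$ to $S$. Hence $S = 2m$, where $m$ is the number of edges; in particular $S$ is even. This double-counting step is the conceptual heart of the argument and is entirely finitary, so it poses no constructive difficulty.

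Next I would split the vertex set according to the parity of the degree. Since each vertex has odd degree or even degree, write $V = V_{\mathrm{odd}} \cup V_{\mathrm{even}}$, where $V_{\mathrm{odd}}$ collects the vertices of odd degree and $V_{\mathrm{even}}$ those of even degree. Then
\[
S = \sum_{v \in V_{\mathrm{odd}}} \deg(v) + \sum_{v \in V_{\mathrm{even}}} \deg(v).
\]
The second sum is a sum of even numbers, hence even. Since $S$ is even by the first step, the first sum $\sum_{v \in V_{\mathrm{odd}}} \deg(v)$ must also be even. But this is a sum of $|V_{\mathrm{odd}}|$ odd numbers, so its parity equals the parity of $|V_{\mathrm{odd}}|$; therefore $|V_{\mathrm{odd}}|$ is even, which is exactly the claim.

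The only point that requires a moment's care in the constructive setting is the decidability of "odd degree or even degree" for each vertex and the decomposition $V = V_{\mathrm{odd}} \cup V_{\mathrm{even}}$: this is legitimate because for a finite graph each degree is a concretely given natural number, and the parity of a natural number is decidable. With that in hand every step above is a finite computation, so there is no genuine obstacle; the argument goes through verbatim in Bishop-style constructive mathematics. I would therefore present it essentially as the classical double-counting proof, merely noting the finiteness that makes the case split admissible.
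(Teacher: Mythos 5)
Your proof is correct and follows essentially the same double-counting argument as the paper: sum the degrees, observe the sum equals twice the number of edges, and deduce the parity of the set of odd-degree vertices. You merely spell out the final parity step (and the constructive decidability of parity) in more detail than the paper does, which is harmless.
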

This is a simple lemma. But for completeness of arguments we provide a proof.
\begin{proof}
Prove this lemma by double counting. Let $d(v)$ be the degree of vertex $v$. The number of vertex-edge incidences in the graph may be counted in two different ways: by summing the degrees of the vertices, or by counting two incidences for every edge. Therefore,
\[\sum_{v\in V}d(v)=2e,\]
where $e$ is the number of edges in the graph. The sum of the degrees of the vertices is therefore an even number. It could happen if and only if an even number of the vertices had odd degree.

\end{proof}

\begin{figure}[ptb]
\begin{center}
\includegraphics[height=5cm]{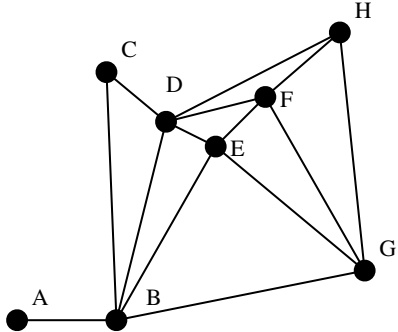}
\end{center}
	\vspace*{-.3cm}
	\caption{Example of graph}
	\label{graph}
\end{figure}

Let $\Delta$ denote an $n$-dimensional simplex. $n$ is a finite natural number. For example, a 2-dimensional simplex is a triangle.  Let partition or triangulate a simplex. Figure \ref{tria1} is an example of partition (triangulation) of a 2-dimensional simplex. In a 2-dimensional case we divide each side of $\Delta$ in $m$ equal segments, and draw the lines parallel to the sides of $\Delta$. Then, the 2-dimensional simplex is partitioned into $m^2$ triangles. We consider partition of $\Delta$ inductively for cases of higher dimension. In a 3 dimensional case each face of $\Delta$ is an 2-dimensional simplex, and so it is partitioned into $m^2$ triangles in the way above mentioned, and draw the planes parallel to the faces of $\Delta$. Then, the 3-dimensional simplex is partitioned into $m^3$ trigonal pyramids. And so on for cases of higher dimension.

\begin{figure}[ptb]
\begin{center}
\includegraphics[height=7.5cm]{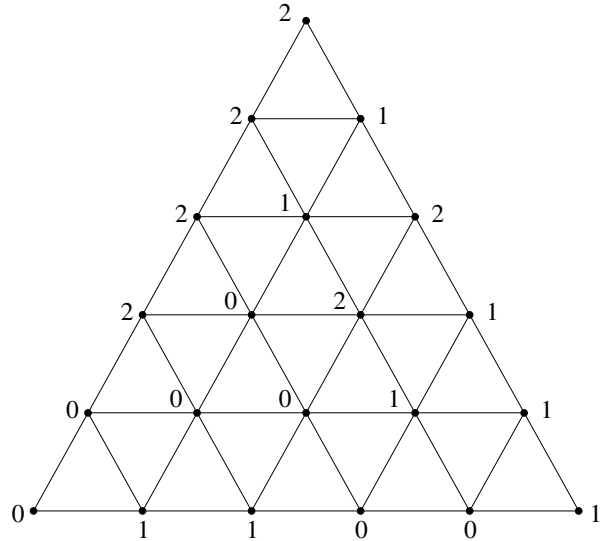}
\end{center}
	\vspace*{-.3cm}
	\caption{Partition and labeling of 2-dimensional simplex}
	\label{tria1}
\end{figure}

Let $K$ denote the set of small $n$-dimensional simplices of $\Delta$ constructed by partition. Vertices of these small simplices of $K$ are labeled with the numbers 0, 1, 2, $\dots$, $n$ subject to the following rules.
\begin{enumerate}
\item The vertices of $\Delta$ are respectively labeled with 0 to $n$. We label a point $(1,0, \dots, 0)$ with 0, a point $(0,1,0, \dots, 0)$ with 1, a point $(0,0,1 \dots, 0)$ with 2, $\dots$, a point $(0,\dots, 0,1)$ with $n$. That is, a vertex whose $k$-th coordinate ($k=0, 1, \dots, n$) is $1$ and all other coordinates are 0 is labeled with $k$. 

\item If a vertex of $K$ is contained in an $n-1$-dimensional face of $\Delta$, then this vertex is labeled with some number which is the same as the number of a vertex of that face.

\item If a vertex of $K$ is contained in an $n-2$-dimensional face of $\Delta$, then this vertex is labeled with some number which is the same as the number of a vertex of that face. And so on for cases of lower dimension.

\item A vertex contained inside of $\Delta$ is labeled with an arbitrary number among 0, 1, $\dots$, $n$.
\end{enumerate}

Now we modify this partition of a simplex as follows.
\begin{quote}
Put a point in an open neighborhood around each vertex inside $\Delta$, and make partition of $\Delta$ replacing each vertex inside $\Delta$ by that point in each neighborhood. The diameter of each neighborhood should be sufficiently small relatively to the size of each small simplex. We label the points in $\Delta$ following the rules (1) $\sim$ (4).
\end{quote}
Then, we obtain a partition of $\Delta$ illustrated in Figure \ref{tria12}.

We further modify this partition as follows;
\begin{quote}
Put a point in an open neighborhood around each vertex on a face (boundary) of $\Delta$, and make partition of $\Delta$ replacing each vertex on the face by that point in each neighborhood, and we label the points in $\Delta$ following the rules (1) $\sim$ (4). This neighborhood is open in a space with dimension lower than $n$.
\end{quote}
Then, we obtain a partition of $\Delta$ depicted in Figure \ref{tria13}.

\begin{figure}[ptb]
\begin{center}
\includegraphics[height=7.5cm]{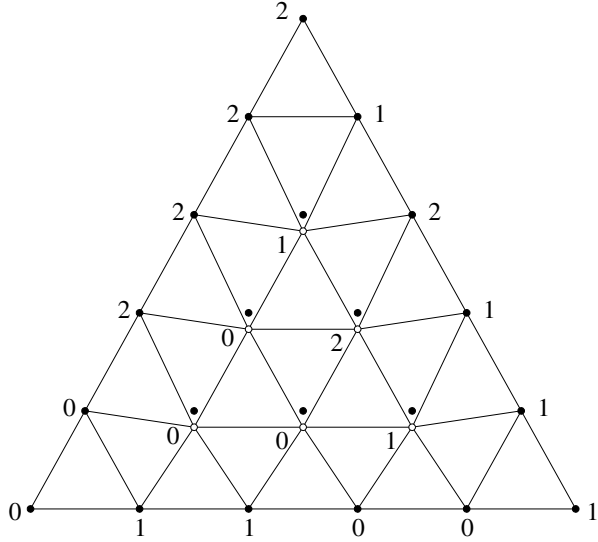}
\end{center}
	\vspace*{-.3cm}
	\caption{Modified partition of a simplex}
	\label{tria12}
\end{figure}

A small simplex of $K$ in this modified partition which is labeled with the numbers 0, 1, $\dots$, $n$ is called a \emph{fully labeled simplex}. Now let us prove Sperner's lemma about the modified partition of a simplex.
\begin{lem}[Sperner's lemma]
If we label the vertices of $K$ following above rules (1) $\sim$ (4), then there are an odd number of fully labeled simplices. Thus, there exists at least one fully labeled simplex. \label{l2}
\end{lem}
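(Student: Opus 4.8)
The plan is to argue by induction on the dimension $n$, with the Handshaking lemma furnishing the parity count at each step. For the base case one may take $n=0$: a $0$-dimensional simplex is a single point, labeled $0$ by rule (1), so there is exactly one fully labeled simplex. (If one prefers $n=1$ as the base, a subdivided segment with endpoints labeled $0$ and $1$ has an odd number of $\{0,1\}$-labeled sub-segments, by counting the changes of label as one traverses the segment.)

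For the inductive step, suppose the lemma holds in dimension $n-1$. Form an undirected graph $G$ whose vertices are the small $n$-simplices of the modified partition $K$ together with one extra vertex $o$ standing for the exterior of $\Delta$. Put an edge between two small simplices when they share an $(n-1)$-dimensional face whose $n$ vertices carry exactly the labels $0,1,\dots,n-1$, and put an edge between a small simplex and $o$ when the simplex has such a face lying on the boundary of $\Delta$. The key local fact, checked by inspecting the finite multiset of labels of a small simplex, is: the simplex has degree $1$ in $G$ exactly when it is fully labeled (deleting the vertex labeled $n$ yields its unique $\{0,\dots,n-1\}$-face); degree $2$ when its vertices realize precisely the label set $\{0,\dots,n-1\}$, necessarily with one repetition, so that deleting either vertex bearing the repeated label gives a qualifying face; and degree $0$ in all remaining cases, where some label in $\{0,\dots,n-1\}$ is absent. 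Hence the odd-degree small simplices are exactly the fully labeled ones.

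Next I would compute the degree of $o$. By the labeling rules (1)--(3), a boundary $(n-1)$-face can carry the label set $\{0,\dots,n-1\}$ only if it lies in the face of $\Delta$ opposite the vertex labeled $n$; and on that face the restricted labeling together with the restricted (lower-dimensional) modified partition satisfies the hypotheses of the lemma in dimension $n-1$. By the induction hypothesis the number of such faces is odd, so $\deg(o)$ is odd. Applying the Handshaking lemma to $G$, the number of odd-degree vertices is even; subtracting off $o$ leaves an odd number of fully labeled small $n$-simplices, which is the claim, and in particular at least one exists.

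The one point that genuinely needs attention — and it is why the statement concerns the modified partition rather than the original one — is that displacing each interior vertex (and each boundary vertex, within its own face) by a sufficiently small amount does not disturb any of the combinatorial data used above: which small simplices share which faces, the fact that a boundary face restricts to a lower-dimensional modified partition, and the applicability of rules (1)--(4) to the displaced points. Since each displacement stays inside a small open neighborhood, the incidence pattern of the triangulation is unchanged, so the argument goes through verbatim. I expect verifying this stability of the combinatorics, rather than the parity count itself, to be the main thing to nail down, though it is essentially routine. Finally, note that every ingredient — finite counting, the decidability of whether a given small simplex is fully labeled, and the induction on $n$ — is constructively legitimate, so the proof stays within Bishop-style constructive mathematics.
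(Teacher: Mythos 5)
Your proof is correct and follows essentially the same route as the paper's: induction on the dimension, a dual graph with one exterior node, the degree-$0/1/2$ classification of small simplices, and the Handshaking lemma to transfer the odd parity from the boundary count (supplied by the induction hypothesis) to the fully labeled $n$-simplices. Your added remarks --- that a fully labeled boundary face can only lie in the face of $\Delta$ opposite the vertex labeled $n$, and that the modification of the partition leaves the incidence combinatorics untouched --- are correct refinements of points the paper treats more briefly.
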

\begin{proof}
See Appendix \ref{app1}.
\end{proof}

\begin{figure}[t]
\begin{center}
\includegraphics[height=8cm]{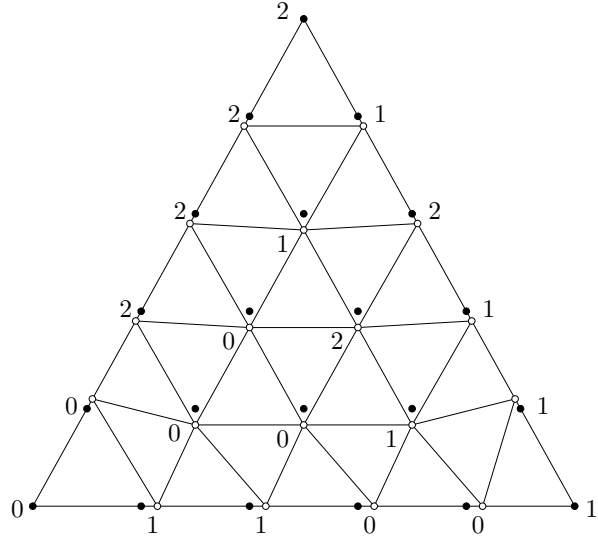}
\end{center}
	\vspace*{-.3cm}
	\caption{Modified partition of a simplex: Two}
	\label{tria13}
\end{figure}

\section{Brouwer's fixed point theorem for sequentially locally non-constant and uniformly sequentially continuous functions}

Let $x=(x_0, x_1, \dots, x_n)$ be a point in an $n$-dimensional simplex $\Delta$, and consider a function $f$ from $\Delta$ to itself.  Denote the $i$-th components of $x$ and $f(x)$ by, respectively, $x_i$ and $f_i(x)$ or $f_i$. 

Uniform continuity, sequential continuity and uniform sequential continuity of functions are defined as follows.
\begin{defn}[Uniform continuity]
A function $f$ is uniformly continuous in $\Delta$ if for any $x, x'\in \Delta$ and $\varepsilon>0$ there exists $\eta>0$ such that
\[\mathrm{If}\ |x-x'|<\eta,\mathrm{\ then}\ |f(x)-f(x')|<\varepsilon.\]
$\eta$ depends on only $\varepsilon$.
\end{defn}
\begin{defn}[Sequential continuity]
A function $f$ is sequentially continuous at $x\in \Delta$ in $\Delta$ if for sequences $(x_n)_{n\geq 1}$ and $(f(x_n))_{n\geq 1}$ in $\Delta$
\[f(x_n)\longrightarrow f(x)\ \mathrm{whenever}\ x_n\longrightarrow x.\]
\end{defn}
\begin{defn}[Uniform sequential continuity]
A function $f$ is uniformly sequentially continuous in $\Delta$ if for sequences $(x_n)_{n\geq 1}$, $(x'_n)_{n\geq 1}$, $(f(x_n))_{n\geq 1}$ and $(f(x'_n))_{n\geq 1}$ in $\Delta$
\[|f(x_n)-f(x'_n)|\longrightarrow 0\ \mathrm{whenever}\ |x_n-x'_n|\longrightarrow 0.\]
\end{defn}
$|x_n-x'_n|\longrightarrow 0$ means
\[\forall \varepsilon>0\ \exists N\ \forall n\geq N\ (|x_n-x'_n|<\varepsilon),\]
where $\varepsilon$ is a real number, and $n$ and $N$ are natural numbers. Similarly, $|f(x_n)-f(x'_n)|\longrightarrow 0$ means
\[\forall \varepsilon>0\ \exists N'\ \forall n\geq N'\ (|f(x_n)-f(x'_n)|<\varepsilon).\]
$N'$ is a natural number. In classical mathematics uniform continuity and uniform sequential continuity of functions are equivalent. But in constructive mathematics a ala Bishop uniform sequential continuity is weaker than uniform continuity, and uniform sequential continuity is stronger than sequential continuity. 

On the other hand, the definition of local non-constancy of functions is as follows:
\begin{defn}[Local non-constancy of functions]
\begin{enumerate}
	\item At a point $x$ on a boundary of a simplex $f(x)\neq x$. This means $f_i(x)>x_i$ or $f_i(x)<x_i$ for at least one $i$. 
	\item In any open set of $\Delta$ there exists a point $x$ such that $f(x)\neq x$.
\end{enumerate}
\end{defn}
We define modified local non-constancy of functions as follows;
\begin{defn}[Modified local non-constancy of functions]
\begin{enumerate}
\item At the vertices of a simplex $\Delta$ $f(x)\neq x$.
\item In any open set contained in the faces (boundaries) of $\Delta$ there exists a point $x$ such that $f(x)\neq x$. This open set is open in a space of dimension lower than $n$.
\item In any open set of $\Delta$ there exists a point $x$ such that $f(x)\neq x$.
\end{enumerate}
\end{defn}
(2) of the modified local non-constancy implies that every vertex $x$ in a partition of a simplex, for example, as illustrated by white circles in Figure \ref{tria13} in a 2-dimensional case, can be selected to satisfy $f(x)\neq x$ even when points on the faces of $\Delta$ (black circles on the edges) do not necessarily satisfy this condition. Even if a function $f$ does not strictly satisfy the local non-constancy so long as it satisfies the modified local non-constancy, we can partition $\Delta$ to satisfy the conditions for Sperner's lemma.

Next, by reference to the notion of \emph{sequentially at most one maximum} in \cite{berg}, we define the property of \emph{sequential local non-constancy}. First we recapitulate the compactness (total boundedness with completeness) of a set in constructive mathematics. $\Delta$ is compact in the sense that for each $\varepsilon>0$ there exists a finite $\varepsilon$-approximation to $\Delta$. An $\varepsilon$-approximation to $\Delta$ is a subset of $\Delta$ such that for each $x\in \Delta$ there exists $y$ in that $\varepsilon$-approximation with $|x-y|<\varepsilon$. Each face (boundary) of $\Delta$ is also a simplex, and so it is compact in a space with dimension lower than $n$. The definition of sequential local non-constancy is as follow;
\begin{defn}[Sequential local non-constancy of functions]
\begin{enumerate}
\item At the vertices of a simplex $\Delta$ $f(x)\neq x$.
\item There exists $\bar{\varepsilon}>0$ with the following property. We have a finite $\varepsilon$-approximation $L=\{x^1, x^2, \dots, x^l\}$ to each face of $\Delta$ for each $\varepsilon$ with $0<\varepsilon<\bar{\varepsilon}$ such that if for all sequences $(x_m)_{m\geq 1}$, $(y_m)_{m\geq 1}$ in each open $\varepsilon$-ball $S'$, which is a subset of the face, around each $x^i\in L$ $|f(x_m)-x_m|\longrightarrow 0$ and $|f(y_m)-y_m|\longrightarrow 0$, then $|x_m-y_m|\longrightarrow 0$. $S'$ is open in a space with dimension lower than $n$.
\item For $\bar{\varepsilon}$ defined above there exists a finite $\varepsilon$-approximation $L=\{x^1, x^2, \dots, x^l\}$ to $\Delta$ for each $\varepsilon$ with $0<\varepsilon<\bar{\varepsilon}$ such that if for all sequences $(x_m)_{m\geq 1}$, $(y_m)_{m\geq 1}$ in each open $\varepsilon$-ball $S'$ around each $x^i\in L$ $|f(x_m)-x_m|\longrightarrow 0$ and $|f(y_m)-y_m|\longrightarrow 0$, then $|x_m-y_m|\longrightarrow 0$.
\end{enumerate}\label{sln}
\end{defn}
(1) of this definition is the same as (1) of the definition of modified local non-constancy. 

Now we show the following two lemmas.
\begin{lem}
Sequential local non-constancy means modified local non-constancy.\label{seq}
\end{lem}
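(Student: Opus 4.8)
The plan is to check the three clauses of modified local non-constancy one at a time against Definition~\ref{sln}. Clause~(1) of Definition~\ref{sln} is word for word clause~(1) of modified local non-constancy, so it transfers with nothing to prove. Clauses~(2) and~(3) have the same shape as each other, so a single argument will do for both: given an open set $U$ --- lying in a face of $\Delta$ for clause~(2), lying in $\Delta$ itself for clause~(3) --- I must produce a point $x\in U$ with $f(x)\neq x$.

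The first step is a covering argument that slides one of the $\varepsilon$-balls supplied by sequential local non-constancy inside $U$. Since $U$ is open it contains an open ball $B(p,r)$ for some centre $p$ and some $r>0$ (open in the face for clause~(2), open in $\Delta$ for clause~(3)). Choose $\varepsilon$ with $0<\varepsilon<\bar\varepsilon$ and small enough that $2\varepsilon<r$, and apply the relevant clause of Definition~\ref{sln} to get a finite $\varepsilon$-approximation $L=\{x^1,\dots,x^l\}$ of the face (resp.\ of $\Delta$). By the defining property of an $\varepsilon$-approximation there is an $x^i\in L$ with $|x^i-p|<\varepsilon$, and then the open $\varepsilon$-ball $S'$ around $x^i$ satisfies $S'\subseteq B(p,r)\subseteq U$ by the triangle inequality and $2\varepsilon<r$; moreover $S'$ is one of the balls for which the sequential hypothesis of Definition~\ref{sln} is assumed to hold.

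The second step extracts the point from $S'$. Being a nondegenerate open ball (positive radius, dimension at least one), $S'$ contains two points $p_1,p_2$ with $|p_1-p_2|\geq\delta$ for some $\delta>0$. Were it the case that $f(x)=x$ for every $x\in S'$, the constant sequences $x_m:=p_1$ and $y_m:=p_2$ would satisfy $|f(x_m)-x_m|\longrightarrow 0$ and $|f(y_m)-y_m|\longrightarrow 0$, so the sequential hypothesis would force $|x_m-y_m|=|p_1-p_2|\longrightarrow 0$, contradicting $|p_1-p_2|\geq\delta$. Hence $S'$, and a fortiori $U$, contains a point $x$ with $f(x)\neq x$, which is exactly what clauses~(2) and~(3) assert.

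I expect the real obstacle to sit entirely in the last inference of the second step. Constructively, the reasoning just given only excludes ``$f(x)=x$ throughout $S'$'', i.e.\ it only gives a doubly negated existence statement, and turning that into an actual witness $x$ with the apartness $f(x)\neq x$ is the delicate point, the bare reductio being legitimate only classically. The way I would attempt to fix this is to feed the sequential hypothesis sequences cleverer than constant ones: keeping the centre $x^i$ fixed and letting $(x_m)$ run through points of $S'$ at distance $\geq\delta$ from $x^i$, the contrapositive of the sequential hypothesis shows that, once $x^i$ happens to be an exact fixed point, $|f(x_m)-x_m|$ does not tend to $0$, so some $x_m$ has $f(x_m)\neq x_m$; dovetailing this with the constructive dichotomy ``$|f(x^i)-x^i|>0$ or $|f(x^i)-x^i|<\lambda$'' for a small $\lambda$, and disposing cleanly of the ``small but not provably zero'' alternative, is where I expect the genuine work to lie. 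The covering step and the bookkeeping over faces of various dimensions I regard as routine.
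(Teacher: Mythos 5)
Your decomposition is the right one, and it matches the paper's: clause (1) transfers verbatim, and for clauses (2) and (3) you use the finite $\varepsilon$-approximation to place one of the designated $\varepsilon$-balls $S'$ inside the given open set, reducing everything to producing a point of $S'$ that is apart from its image. You have also correctly diagnosed that the reductio with constant sequences only refutes ``$f(x)=x$ throughout $S'$'' and does not constructively produce a witness. But that diagnosis is where your proof stops: the step you defer (``disposing cleanly of the small-but-not-provably-zero alternative'') is precisely the content of the lemma, and the sketch you offer for it --- a contrapositive of the sequential hypothesis plus a dichotomy on $|f(x^i)-x^i|$ --- does not close the gap. Constructively, ``$|f(x_m)-x_m|$ does not tend to $0$'' is a negative statement and does not yield an index $m$ with $f(x_m)\neq x_m$; and in the branch $|f(x^i)-x^i|<\lambda$ you are back exactly where you started. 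So there is a genuine missing idea.

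The missing idea is the Berger--Bridges--Schuster hybrid-sequence trick, which the paper imports from the treatment of ``sequentially at most one maximum''. Fix two distinct points $x,y\in S'$ and a sequence $(z(m))$ in $S'$ with $|f(z(m))-z(m)|\longrightarrow 0$. Using cotransitivity of $<$ on the single real number $\max(|f(x)-x|,|f(y)-y|)$, build an increasing binary sequence $(\lambda_m)$ with $\lambda_m=0\Rightarrow\max(|f(x)-x|,|f(y)-y|)<2^{-m}$ and $\lambda_m=1\Rightarrow\max(|f(x)-x|,|f(y)-y|)>2^{-m-1}$. Define $x(m)=x,\ y(m)=y$ while $\lambda_m=0$ and $x(m)=y(m)=z(m)$ once $\lambda_m=1$. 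Both $|f(x(m))-x(m)|$ and $|f(y(m))-y(m)|$ then converge to $0$ unconditionally, so the sequential hypothesis gives $|x(m)-y(m)|\longrightarrow 0$; computing $M$ with $|x(M)-y(M)|<|x-y|$ forces $\lambda_M=1$, hence $\max(|f(x)-x|,|f(y)-y|)>2^{-M-1}>0$, hence $f(x)\neq x$ or $f(y)\neq y$ --- a decidable disjunction that delivers the witness. Note that this argument itself quietly presupposes the existence of the sequence $(z(m))$ with $|f(z(m))-z(m)|\longrightarrow 0$ in $S'$, which the paper asserts without justification; but in any case some such positive mechanism, not a bare reductio or a contrapositive, is what the proof needs, and your proposal does not supply it.
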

The essence of this proof is according to the proof of Proposition 1 of \cite{berg}.
\begin{proof}
Let $S'$ be a set as defined in (2) or (3) of Definition \ref{sln}. Construct a sequence $(z(m))_{m\geq 1}$ in $S'$ such that $|f(z(m))-z(m)|\longrightarrow 0$. Consider $x$, $y$ in $S'$ with $x\neq y$. Construct an increasing binary sequence $(\lambda_m)_{m\geq 1}$ such that
\[\lambda_m=0\Rightarrow \max(|f(x)-x|, |f(y)-y|)<2^{-m},\]
\[\lambda_m=1\Rightarrow \max(|f(x)-x|, |f(y)-y|)>2^{-m-1}.\]
We may assume that $\lambda_1=0$. If $\lambda_m=0$, set $x(m)=x$ and $y(m)=y$. If $\lambda_m=1$, set $x(m)=y(m)=z(m)$. Now the sequences $(|f(x(m))-x(m)|)_{m\geq 1}$, $(|f(y(m))-y(m)|)_{m\geq 1}$ converge to 0, and so by sequential local non-constancy $|x(m)-y(m)|\longrightarrow 0$. Computing $M$ such that $|x(M)-y(M)|<|x-y|$, we see that $\lambda_M=1$. Therefore, $f(x)\neq x$ or $f(y)\neq y$.

Let $S$ be an open set in $\Delta$ or in a face of $\Delta$. Then there exists an $\varepsilon$-approximation to $\Delta$ or the face of $\Delta$ with sufficiently small $\varepsilon$ such that an $\varepsilon$-ball around some point in that $\varepsilon$-approximation is included in $S$. Therefore, there exists a point $x$ in $S$ such that $f(x)\neq x$.
\end{proof}

\begin{lem}
Let $f$ be a uniformly sequentially continuous function from $\Delta$ to itself, and assume that $\inf_{x\in S}|f(x)-x|=0$ where $S$ is nonempty and $S\subset \Delta$. If the following property holds:
\begin{quote}
For each $\varepsilon>0$ there exists $\eta>0$ such that if $x, y\in S$, $|f(x)-x|<\eta$ and $|f(y)-y|<\eta$, then $|x-y|\leq \varepsilon$.
\end{quote}
Then, there exists a point $\xi\in \Delta$ such that $f(\xi)=\xi$, that is, a fixed point of $f$. \label{fix0}
\end{lem}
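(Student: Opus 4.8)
The plan is to produce a fixed point as the limit of a sequence of approximate fixed points, using the stated approximate-uniqueness condition to guarantee that this sequence converges, and then invoking continuity to pass to the limit.

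First, from the hypothesis $\inf_{x\in S}|f(x)-x|=0$ I would extract, for each natural number $m\geq 1$, a point $x_m\in S$ with $|f(x_m)-x_m|<2^{-m}$; such a point exists by the constructive meaning of the infimum being $0$ (together with countable choice, which is available in Bishop-style mathematics). This gives a sequence $(x_m)_{m\geq 1}$ in $S$ with $|f(x_m)-x_m|\longrightarrow 0$. Next I would show that $(x_m)_{m\geq 1}$ is a Cauchy sequence: given $\varepsilon>0$, let $\eta>0$ be as in the stated property, and choose $N$ with $2^{-N}<\eta$. For $m,m'\geq N$ we have $|f(x_m)-x_m|<2^{-N}<\eta$ and $|f(x_{m'})-x_{m'}|<2^{-N}<\eta$, so the property yields $|x_m-x_{m'}|\leq\varepsilon$. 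Since $\Delta$ is complete (being compact), the sequence converges to some $\xi\in\Delta$.

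It then remains to check $f(\xi)=\xi$. I would apply uniform sequential continuity with the comparison sequence $(x'_m)_{m\geq1}$ taken constantly equal to $\xi$: since $|x_m-\xi|\longrightarrow 0$, uniform sequential continuity gives $|f(x_m)-f(\xi)|\longrightarrow 0$. Then for any $\varepsilon>0$, choosing $m$ large enough that each of $|f(\xi)-f(x_m)|$, $|f(x_m)-x_m|$ and $|x_m-\xi|$ is below $\varepsilon/3$, the triangle inequality gives $|f(\xi)-\xi|<\varepsilon$. Hence $|f(\xi)-\xi|=0$, that is, $\xi$ is a fixed point of $f$.

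The argument is essentially routine; the only place where the hypotheses do real work is the Cauchy step, where the ``approximately unique approximate fixed point'' condition is precisely what converts $|f(x_m)-x_m|\to 0$ into a modulus of convergence for $(x_m)$. I expect the main (minor) care to be constructive bookkeeping: keeping the choice of $x_m$ and the Cauchy modulus explicit, and noting that uniform sequential continuity suffices in place of sequential continuity here by using the constant comparison sequence.
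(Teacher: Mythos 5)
Your proposal is correct and follows essentially the same route as the paper: choose a sequence of approximate fixed points, use the approximate-uniqueness hypothesis to show it is Cauchy, pass to the limit by completeness, and conclude by continuity. Your handling of the final step is in fact slightly more careful than the paper's, which just invokes ``the continuity of $f$''; your explicit use of uniform sequential continuity against the constant comparison sequence is exactly the right way to justify it under the stated hypotheses.
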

\begin{proof}
Choose a sequence $(x(m))_{m\geq 1}$ in $S$ such that $|f(x(m))-x(m)|\longrightarrow 0$. Compute $M$ such that $|f(x(m))-x(m)|<\eta$ for all $m\geq M$. Then, for $l, m\geq M$ we have $|x(l)-x(m)|\leq \varepsilon$. Since $\varepsilon>0$ is arbitrary, $(x(m))_{m\geq 1}$ is a Cauchy sequence in S, and converges to a limit $\xi\in S$. The continuity of $f$ yields $|f(\xi)-\xi|=0$, that is, $f(\xi)=\xi$.
\end{proof}
The converse of Lemma \ref{seq} does not hold because the sequential local non-constancy implies isolatedness of fixed points but the local non-constancy and the modified local non-constancy do not.

Using Sperner's lemma (for modified partition of a simplex) we show that there exists an exact fixed point for any sequentially locally non-constant and uniformly sequentially continuous function from an $n$-dimensional simplex to itself. 
\begin{thm}[Brouwer's fixed point theorem for sequentially  locally non-constant functions]
Any sequentially locally non-constant and uniformly sequentially continuous function from an $n$-dimensional simplex $\Delta$ to itself has a fixed point.
\end{thm}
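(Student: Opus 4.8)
The plan is to combine the modified Sperner's lemma (Lemma~\ref{l2}) with the two preparatory lemmas (Lemma~\ref{seq} and Lemma~\ref{fix0}) in the classical style, but with all existential statements replaced by their constructive, approximate counterparts. First I would set up the standard labeling induced by $f$: given a partition of $\Delta$ (of mesh $1/m$) modified so that every interior vertex and every vertex on a face can be chosen, by Lemma~\ref{seq} (sequential local non-constancy implies modified local non-constancy), to satisfy $f(x)\neq x$, assign to each such vertex $x$ a label $\ell(x)=\min\{i : f_i(x)<x_i\}$ — or more carefully $\ell(x)$ chosen among the indices $i$ with $f_i(x)\le x_i$ and $\sum x_i=\sum f_i(x)=1$ forcing at least one such $i$ with in fact $f_i(x)<x_i$ because $f(x)\neq x$. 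One checks this labeling satisfies Sperner's rules (1)$\sim$(4): on a face $x_i=0$ one cannot have $f_i(x)<x_i$, so the label of a vertex on a $k$-face lies among the labels of that face's vertices. Then Lemma~\ref{l2} produces, for each $m$, a fully labeled small simplex, whose $n+1$ vertices $x^{0,m},\dots,x^{n,m}$ satisfy $f_i(x^{i,m})<x^{i,m}_i$ for each $i$ and have mutual distances $\le c/m$.

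Next I would extract an approximate-fixed-point sequence. Fix any one vertex of the fully labeled simplex, say $x^m:=x^{0,m}$. Since all $n+1$ vertices are within $c/m$ of each other, $|x^{i,m}-x^m|\to 0$ for each $i$; by uniform sequential continuity $|f(x^{i,m})-f(x^m)|\to 0$, hence $|f_i(x^{i,m})-f_i(x^m)|\to 0$ and $|x^{i,m}_i - x^m_i|\to 0$. From $f_i(x^{i,m})<x^{i,m}_i$ we get, in the limit sense, $f_i(x^m)\le x^m_i + \delta_m$ with $\delta_m\to 0$, for every $i=0,\dots,n$. Summing over $i$ and using $\sum_i f_i(x^m)=\sum_i x^m_i=1$ forces each inequality to be tight up to $o(1)$, so $|f_i(x^m)-x^m_i|\le \delta'_m$ for all $i$, i.e. $|f(x^m)-x^m|\to 0$. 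In particular $\inf_{x\in\Delta}|f(x)-x|=0$; I would record this, and more usefully note that the $x^m$ all lie in (or can be replaced by nearby points of) a fixed finite $\varepsilon$-approximation $L$ once $m$ is large, so infinitely many of them fall in one common $\varepsilon$-ball $S'$ of the kind appearing in Definition~\ref{sln}(3).

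Then I would apply Lemma~\ref{fix0} on that ball $S'$. Its hypothesis — for each $\varepsilon>0$ an $\eta>0$ such that $|f(x)-x|<\eta$ and $|f(y)-y|<\eta$ with $x,y\in S'$ imply $|x-y|\le\varepsilon$ — is exactly the "$\varepsilon$–$\eta$" restatement of the sequential condition in Definition~\ref{sln}(3), and the standard equivalence (an $\varepsilon$–$\eta$ statement is constructively equivalent to the corresponding statement about null sequences, provable by the increasing-binary-sequence device already used in the proof of Lemma~\ref{seq}) lets me pass from the sequential form to this form. Having verified the hypothesis, and having the approximate-fixed-point sequence $(x^m)$ inside $S'$ with $|f(x^m)-x^m|\to 0$, Lemma~\ref{fix0} yields $\xi\in\Delta$ with $f(\xi)=\xi$, using the uniform sequential continuity for the final step $|f(\xi)-\xi|=0$.

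The main obstacle I expect is the constructive bookkeeping around the labeling and the limit argument, not the topology. Classically one would just say "some fully labeled simplex shrinks to a fixed point," but constructively I must (a) justify that the label $\ell(x)=\min\{i:f_i(x)<x_i\}$ is well defined and decidable at the chosen vertices — this needs $f(x)\neq x$ there, hence needs Lemma~\ref{seq}, and needs the order on reals to be used only in the form $f_i(x)<x_i$ or $f_i(x)>x_i$; (b) handle that one cannot in general decide $\lim x^m$ exists — this is why I route everything through a single finite $\varepsilon$-approximation $L$ and a fixed ball $S'$, reducing to Lemma~\ref{fix0} rather than trying to take a limit of the $x^m$ directly; and (c) make precise the "summing forces tightness" step so that $|f(x^m)-x^m|\to 0$ genuinely follows, which is a short but slightly delicate estimate with the simplex constraint. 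Once these are in place the theorem follows by assembling Lemma~\ref{l2}, Lemma~\ref{seq}, and Lemma~\ref{fix0}.
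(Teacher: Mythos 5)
Your proposal follows essentially the same route as the paper: the labeling by an index $k$ with $f_k(x)<x_k$ induces a Sperner labeling on the modified partition (using Lemma~\ref{seq} to guarantee $f(x)\neq x$ at the chosen vertices), the modified Sperner lemma (Lemma~\ref{l2}) together with uniform sequential continuity yields $|f(x^m)-x^m|\to 0$ along vertices of shrinking fully labeled simplices, and the increasing-binary-sequence device converts sequential local non-constancy into the $\varepsilon$--$\eta$ hypothesis of Lemma~\ref{fix0}, which delivers the exact fixed point. The only cosmetic difference is that you localize the last step to a single ball of the finite $\varepsilon$-approximation, whereas the paper runs the binary-sequence argument on the set $K=\{(x,y):|x-y|\geq\varepsilon\}$ directly; the substance is the same.
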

\begin{proof}
Let us prove this theorem through some steps.
\begin{enumerate}
\item First we show that we can partition $\Delta$ so that the conditions for Sperner's lemma (for modified partition of a simplex) are satisfied. We partition $\Delta$ according to the method in the proof of Sperner's lemma, and label the vertices of simplices constructed by partition of $\Delta$. It is important how to label the vertices contained in the faces of $\Delta$. Let $K$ be the set of small simplices constructed by partition of $\Delta$, $x=(x_0, x_1, \dots, x_n)$ be a vertex of a simplex of $K$, and denote the $i$-th coordinate of $f(x)$ by $f_i$ or $f(x)_i$. We label a vertex $x$ according to the following rule,
\[\mathrm{If}\ x_k>f_k,\ \mathrm{we\ label}\ x\ \mathrm{with}\ k.\]
If there are multiple $k$'s which satisfy this condition, we label $x$ conveniently for the conditions for Sperner's lemma to be satisfied.

Let us check labeling for vertices in three cases.
\begin{enumerate}
	\item Vertices of $\Delta$:

One of the coordinates of a vertex $x$ of $\Delta$ is 1, and all other coordinates are zero. Consider a vertex $(1, 0, \dots, 0)$. By the modified local non-constancy $f(x)\neq x$ means $f_j>x_j$ or $f_j<x_j$ for at least one $j$. $f_i<x_i$ can not hold for $i\neq 0$. On the other hand, $f_0>x_0$ can not hold. When $f_0(x)<x_0$, we label $x$ with 0. Assume that $f_i(x)>x_i=0$ for some $i\neq 0$. Then, since $\sum_{j=0}^nf_j(x)=1=x_0$, we have $f_0(x)<x_0$. Therefore, $x$ is labeled with 0. Similarly a vertex $x$ whose $k$-th coordinate is 1 is labeled with $k$ for all $k\in \{0, 1, \dots, n\}$.

	\item Vertices in the faces of $\Delta$:

Let $x$ be a vertex of a simplex contained in an $n-1$-dimensional face of $\Delta$ such that $x_i=0$ for one $i$ among $0, 1, 2, \dots, n$ (its $i$-th coordinate is 0). $f(x)\neq x$ means that $f_j>x_j$ or $f_j<x_j$ for at least one $j$. $f_i<x_i=0$ can not hold. When $f_k<x_k$ for some $k\neq i$, we label $x$ with $k$. Assume $f_i>x_i=0$. Then, since $\sum_{j=0}^nx_j=\sum_{j=0}^nf_j=1$, we have $f_k<x_k$ for some $k\neq i$, and we label $x$ with $k$. Assume that $f_j>x_j$ for some $j\neq i$. Then, since $x_i=0$ and
\[\sum_{l=0, l\neq j}^n f_l<\sum_{l=0, l\neq j}^nx_l,\]
we have $f_k<x_k$ for some $k\neq i, j$, and we label $x$ with $k$.

We have proved that we can label each vertex of a simplex contained in an $n-1$-dimensional face of $\Delta$ such that $x_i=0$ for one $i$ among $0, 1, 2, \dots, n$ with a number other than $i$. By similar procedures we can show that we can label the vertices of a simplex contained in an $n-2$-dimensional face of $\Delta$ such that $x_i=0$ for two $i$'s among $0, 1, 2, \dots, n$ with a number other than those $i$'s, and so on.

\begin{quote}
Consider a case where, for example, $x_{i+1}=x_{i+1}=0$. Neither $f_i<x_i=0$ nor $f_i<x_{i+1}=0$ can hold. When $f_k<x_k$ for some $j\neq i, i+1$, we label $x$ with $k$. Assume $f_i>x_i=0$ or $f_{i+1}>x_{i+1}=0$. Then, since $\sum_{j=0}^nx_j=\sum_{j=0}^nf_j=1$, we have $f_k<x_k$ for some $k\neq i,\ i+1$, and we label $x$ with $k$. Assume that $f_j>x_j$ for some $j\neq i, i+1$. Then, since $x_i=x_{i+1}=0$ and
\[\sum_{l=0, l\neq j}^n f_l<\sum_{l=0, l\neq j}^nx_l,\]
we have $f_k<x_k$ for some $k\neq i, i+1, j$, and we label $x$ with $k$.
\end{quote}

	\item Vertices of small simplices inside $\Delta$:

By the modified local non-constancy of $f$ every vertex $x$ in a modified partition of a simplex can be selected to satisfy $f(x)\neq x$. Assume that $f_i>x_i$ for some $i$. Then, since $\sum_{j=0}^n x_j=\sum_{j=0}^n f_j=1$, we have
\[f_k<x_k\]
for some $k\neq i$, and we label $x$ with $k$.
\end{enumerate}

Therefore, the conditions for Sperner's lemma (for modified partition of a simplex) are satisfied, and there exists an odd number of fully labeled simplices in $K$.

\item Consider a sequence $(\Delta_m)_{m\geq 1}$ of partitions of $\Delta$, and a sequence of fully labeled simplices $(\delta_m)_{m\geq 1}$. The larger $m$, the finer the partition. The larger $m$, the smaller the diameter of a fully labeled simplex. Let $x_m^0, x_m^1, \dots$ and $x_m^n$ be the vertices of a fully labeled simplex $\delta_m$. We name these vertices so that $x_m^0, x_m^1, \dots, x_m^n$ are labeled, respectively, with 0, 1, $\dots$, $n$. The values of $f$ at theses vertices are $f(x_m^0), f(x_m^1), \dots$ and $f(x_m^n)$. We can consider sequences of vertices of fully labeled simplices. Denote them by $(x_m^0)_{m\geq 1}$, $(x_m^1)_{m\geq 1}$, $\dots$, and $(x_m^n)_{m\geq 1}$. And consider sequences of the values of $f$ at vertices of fully labeled simplices. Denote them by $(f(x_m^0))_{m\geq 1}$, $(f(x_m^1))_{m\geq 1}$, $\dots$, and $(f(x_m^n))_{m\geq 1}$. By the uniform sequential continuity of $f$
\[|(f(x_m^i))_{m\geq 1}-(f(x_m^j))_{m\geq 1}|\longrightarrow 0\ \mathrm{whenever}\ |(x_m^i)_{m\geq 1}-(x_m^j)_{m\geq 1}|\longrightarrow 0,\]
for $i\neq j$. $|(x_m^i)_{m\geq 1}-(x_m^j)_{m\geq 1}|\longrightarrow 0$ means
\[\forall \varepsilon>0\ \exists M\ \forall m\geq M\ (|x_m^i-x^j_m|<\varepsilon)\ i\neq j,\]
and $|(f(x_m^i))_{m\geq 1}-(f(x_m^j))_{m\geq 1}|\longrightarrow 0$ means
\[\forall \varepsilon>0\ \exists M'\ \forall m\geq M'\ (|f(x^i_m)-f(x^j_m)|<\varepsilon)\ i\neq j.\]
Consider a fully labeled simplex $\delta_l$ in partition of $\Delta$ such that $l\geq \max(M, M')$. Denote vertices of $\delta_l$ by $x^0$, $x^1$, $\dots$, $x^n$. We name these vertices so that $x^0, x^1, \dots, x^n$ are labeled, respectively, with 0, 1, $\dots$, $n$. Then, $|x^i-x^j|<\varepsilon$ and $|f(x^i)-f(x^j)|<\varepsilon$.

About $x^0$, from the labeling rules we have $x^0_0>f(x^0)_0$. About $x^1$, also from the labeling rules we have $x^1_1>f(x^1)_1$ which implies $x^1_1>f(x^1)_1$. $|f(x^0)-f(x^1)|<\varepsilon$ means $f(x^1)_1>f(x^0)_1-\varepsilon$. On the other hand, $|x^0-x^1|<\varepsilon$ means $x^0_1>x^1_1-\varepsilon$. Thus, from
\[x^0_1>x^1_1-\varepsilon,\ x^1_1>f(x^1)_1,\ f(x^1)_1>f(x^0)_1-\varepsilon\]
we obtain
\[x^0_1>f(x^0)_1-2\varepsilon\]
By similar arguments, for each $i$ other than 0,
\begin{equation}
x^0_i>f(x^0)_i-2\varepsilon. \label{e1}
\end{equation}
For $i=0$ we have
\begin{equation}
x^0_0>f(x^0)_0 \label{e2}
\end{equation}
Adding (\ref{e1}) and (\ref{e2}) side by side except for some $i$ (denote it by $k$) other than 0,
\[\sum_{j=0, j\neq k}^{n} x^0_j>\sum_{j=0, j\neq k}^{n} f(x^0)_j-2(n-1)\varepsilon.\]
From $\sum_{j=0}^{n} x^0_j=1$, $\sum_{j=0}^{n} f(x^0)_j=1$ we have $1-x^0_k>1-f(x^0)_k-2(n-1)\varepsilon$, which is rewritten as
\[x^0_k<f(x^0)_k+2(n-1)\varepsilon.\]
Since (\ref{e1}) implies $x^0_k>f(x^0)_k-2\varepsilon$, we have
\[f(x^0)_k-2\varepsilon<x^0_k<f(x^0)_k+2(n-1)\varepsilon.\]
Thus,
\begin{equation}
|x^0_k-f(x^0)_k|<2(n-1)\varepsilon \label{e3}
\end{equation}
is derived. On the other hand, adding (\ref{e1}) from 1 to $n$ yields
\begin{equation*}
\sum_{j=1}^{n} x^0_j>\sum_{j=1}^{n} f(x^0)_j-2n\varepsilon.
\end{equation*}
From $\sum_{j=0}^{n} x^0_j=1$, $\sum_{j=0}^{n} f(x^0)_j=1$ we have
\begin{equation}
1-x^0_0>1-f(x^0)_0-2n\varepsilon.\label{nash1}
\end{equation}
Then, from (\ref{e2}) and (\ref{nash1}) we get
\begin{equation}
|f(x^0)_0-x^0_0|<2n\varepsilon. \label{e24}
\end{equation}
From (\ref{e3}) and (\ref{e24}) we obtain the following result,
\begin{equation*}
|f(x^0)_i-x^0_i|<2n\varepsilon\ \mathrm{for\ all}\ i. 
\end{equation*}
Thus,
\begin{equation}
|f(x^0)-x^0|<n(n+1)(2\varepsilon).\label{fp}
\end{equation}
Since $\varepsilon$ is arbitrary and $n$ is finite, $\inf_{x\in \delta^n}|f(x)-x|=0$.

\item Choose a sequence $(\xi(m))_{m\geq 1}$ in $\delta^n$ such that $|f(\xi(m))-\xi(m)|\longrightarrow 0$. In view of Lemma \ref{fix0} it is enough to prove that the following condition holds.
\begin{quote}
For each $\varepsilon>0$ there exists $\eta>0$ such that if $x, y\in \delta^n$, $|f(x)-x|<\eta$ and $|f(y)-y|<\eta$, then $|x-y|<\varepsilon$.
\end{quote}
Assume that the set
\[K=\{(x,y)\in \delta^n\times \delta^n:\ |x-y|\geq \varepsilon\}\]
is nonempty and compact. Since the mapping $(x,y)\longrightarrow \max(|f(x)-x|,|f(y)-y|)$ is uniformly continuous, we can construct an increasing binary sequence $(\lambda(m))_{m\geq 1}$ such that
\[\lambda_m=0\Rightarrow \inf_{(x,y)\in K}\max(|f(x)-x|,|f(y)-y|)<2^{-m},\]
\[\lambda_m=1\Rightarrow \inf_{(x,y)\in K}\max(|f(x)-x|,|f(y)-y|)>2^{-m-1}.\]
It suffices to find $m$ such that $\lambda_m=1$. In that case, if $|f(x)-x|<2^{-m-1}$ and $|f(y)-y|<2^{-m-1}$, we have $(x,y)\notin K$ and $|x-y|\leq \varepsilon$. Assume $\lambda_1=0$. If $\lambda_m=0$, choose $(x(m), y(m))\in K$ such that $\max(|f(x(m))-x(m)|, |f(y(m))-y(m)|)<2^{-m}$, and if $\lambda_m=1$, set $x(m)=y(m)=\xi(m)$. Then, $|f(x(m))-x(m)|\longrightarrow 0$ and $|f(y(m))-y(m)|\longrightarrow 0$, so $|x(m)-y(m)|\longrightarrow 0$. Computing $M$ such that $|x(M)-y(M)|<\varepsilon$, we must have $\lambda_M=1$. Note that $f$ is a sequentially locally non-constant and uniformly sequentially continuous function from $\Delta$ to itself. Thus, we have completed the proof.
\end{enumerate}
\end{proof}

\section{From Brouwer's fixed point theorem for sequentially locally non-constant and uniformly sequentially continuous functions to Sperner's lemma}

In this section we will derive Sperner's lemma from Brouwer's fixed point theorem for sequentially locally non-constant and uniformly sequentially continuous functions. Let $\Delta$ be an $n$-dimensional simplex. Denote a point on $\Delta$ by $x$. Consider a function $f$ from $\Delta$ to itself. Partition $\Delta$ in the way depicted in Figure \ref{tria2} in the appendix. Let $K$ denote the set of small $n$-dimensional simplices of $\Delta$ constructed by partition. Vertices of these small simplices of $K$ are labeled with the numbers 0, 1, 2, $\dots$, $n$ subject to the same rules as those in Lemma \ref{l2}. Now we derive Sperner's lemma expressed in Lemma \ref{l2} from Brouwer's fixed point theorem for sequentially locally non-constant and uniformly sequentially continuous functions.

Denote the vertices of an $n$-dimensional simplex of $K$ by $x^0, x^1, \dots, x^n$, the $j$-th coordinate of $x^i$ by $x^i_j$, and denote the label of $x^i$ by $l(x^i)$. Let $\tau$ be a positive number which is smaller than $x^i_{l(x^i)}$ for all $x^i$, and define a function $f(x^i)$ as follows\footnote{We refer to \cite{yoseloff} about the definition of this function.}:
\[f(x^i)=(f_0(x^i), f_1(x^i), \dots, f_n(x^i)),\]
and
\begin{equation}
f_j(x^i)=\left\{
\begin{array}{ll}
x^i_j-\tau&\mathrm{for}\ j=l(x^i),\\
x^i_j+\frac{\tau}{n}&\mathrm{for}\ j\neq l(x^i).\label{e0}
\end{array}
\right.
\end{equation}
$f_j$ denotes the $j$-th component of $f$. From the labeling rules we have $x^i_{l(x^i)}>0$ for all $x^i$, and so $\tau>0$ is well defined. Since $\sum_{j=0}^nf_j(x^i)=\sum_{j=0}^nx^i_j=1$, we have
\[f(x^i)\in \Delta.\]
We extend $f$ to all points in the simplex by convex combinations on the vertices of the simplex. Let $z$ be a point in the $n$-dimensional simplex of $K$ whose vertices are $x^0, x^1, \dots, x^n$. Then, $z$ and $f(z)$ are expressed as follows:
\[z=\sum_{i=0}^n\lambda_ix^i,\ \mathrm{and}\ f(z)=\sum_{i=0}^n\lambda_if(x^i),\ \lambda_i\geq 0,\ \sum_{i=0}^n\lambda_i=1.\]

We very that $f$ is uniformly sequentially continuous. Consider sequences $(x(n))_{n\geq 1}$, $(x'(n))_{n\geq 1}$, $(f(x(n)))_{n\geq 1}$ and $(f(x'(n)))_{n\geq 1}$ such that $|x(n)-x'(n)|\longrightarrow 0$. Denote each component of $x(n)$ by $x(n)_j$ and so on. When $|x(n)-x'(n)|\longrightarrow 0$, $|x(n)_j-x'(n)_j|\longrightarrow 0$ for each $j$. Then, since $\tau>0$, we have $|f(x(n))-f(x'(n))|\longrightarrow 0$, and so $f$ is uniformly sequentially continuous.

Next we verify that $f$ is sequentially locally non-constant.
\begin{enumerate}
	\item Assume that a point $z$ is contained in an $n-1$-dimensional small simplex $\delta^{n-1}$ constructed by partition of an $n-1$-dimensional face of $\Delta$ such that its $i$-th coordinate is $z_i=0$. Denote the vertices of $\delta^{n-1}$ by $z^j,\ j=0, 1, \dots, n-1$ and their $i$-th coordinate by $z_i^j$. Then, we have
\[f_i(z)=\sum_{j=0}^{n-1}\lambda_jf_i(z^j),\ \lambda_j\geq 0,\ \sum_{j=0}^n\lambda_j=1.\]
Since all vertices of $\delta^{n-1}$ are not labeled with $i$, (\ref{e0}) means $f_i(z^j)>z^j_i$ for all $j=\{0, 1, \dots, n-1\}$. Then, there exists no sequence $(z(m))_{m\geq 1}$ such that $|f(z(m))-z(m)|\longrightarrow 0$ in an $n-1$-dimensional face of $\Delta$.
\item Let $z$ be a point in an $n$-dimensional simplex $\delta^n$. Assume that no vertex of $\delta^n$ is labeled with $i$. Then
\begin{equation}
f_i(z)=\sum_{j=0}^n\lambda_jf_i(x^j)=z_i+\left(1+\frac{1}{n}\right)\tau.\label{e8}
\end{equation}
Then, there exists no sequence $(z(m))_{m\geq 1}$ such that $|f(z(m))-z(m)|\longrightarrow 0$ in $\delta^n$.
\item Assume that $z$ is contained in a fully labeled $n$-dimensional simplex $\delta^n$, and rename vertices of $\delta^n$ so that a vertex $x^i$ is labeled with $i$ for each $i$. Then,
\begin{align*}
f_i(z)=\sum_{j=0}^n\lambda_jf_i(x^j)=\sum_{j=0}^n\lambda_jx_i^j+\sum_{j\neq i}\lambda_j\frac{\tau}{n}-\lambda_i\tau=z_i+\left(\frac{1}{n}\sum_{j\neq i}\lambda_j-\lambda_i\right)\tau\ \mathrm{for\ each}\ i.
\end{align*}
Consider sequences $(z(m))_{m\geq 1}=(z(1), z(2), \dots)$, $(z'(m))_{m\geq 1}=(z'(1), z'(2), \dots)$ such that $|f(z(m))-z(m)|\longrightarrow 0$ and $|f(z'(m))-z'(m)|\longrightarrow 0$.

Let $z(m)=\sum_{i=0}^n\lambda(m)_ix^i$ and $z'(m)=\sum_{i=0}^n\lambda'(m)_ix^i$. Then, we have
\[\frac{1}{n}\sum_{j\neq i}\lambda(m)_j-\lambda(m)_i\longrightarrow 0,\ \mathrm{and}\ \frac{1}{n}\sum_{j\neq i}\lambda'(m)_j-\lambda'(m)_i\longrightarrow 0\ \mathrm{for\ all}\ i.\]
Therefore, we obtain
\[\lambda(m)_i\longrightarrow \frac{1}{n+1}, \mathrm{and}\ \lambda'(m)_i\longrightarrow \frac{1}{n+1}.\]
These mean
\[|z(m)-z'(m)|\longrightarrow 0.\]
\end{enumerate}
Thus, $f$ is sequentially locally non-constant, and it has a fixed point. Let $z^*$ be a fixed point of $f$. We have
\begin{equation}
z^*_i=f_i(z^*)\ \mathrm{for\ all}\ i.\label{e15}
\end{equation}
Suppose that $z^*$ is contained in a small $n$-dimensional simplex $\delta^*$. Let $z^0, z^1, \dots, z^n$ be the vertices of $\delta^*$. Then, $z^*$ and $f(z^*)$ are expressed as
\[z^*=\sum_{i=0}^n\lambda_iz^i\ \mathrm{and}\ f(z^*)=\sum_{i=0}^n\lambda_if(z^i),\ \lambda_i\geq 0,\ \sum_{i=0}^n\lambda_i=1.\]
(\ref{e0}) implies that if only one $z^k$ among $z^0, z^1, \dots, z^n$ is labeled with $i$, we have
\[f_i(z^*)=\sum_{j=0}^n\lambda_jf_i(z^j)=\sum_{j=0}^n\lambda_jz_i^j+\sum_{j\neq k}^n\lambda_j\frac{\tau}{n}-\lambda_k\tau=z_i^*\ \mathrm{(}z_i^*\mathrm{\ is\ the}\ i\mathrm{-th\ coordinate\ of}\ z^*\mathrm{)}.\]
This means
\[\frac{1}{n}\sum_{j\neq k}^n\lambda_j-\lambda_k=0.\]
Then, (\ref{e15}) is satisfied with $\lambda_k=\frac{1}{n+1}$ for all $k$. If no $z^j$ is labeled with $i$, we have (\ref{e8}) with $z=z^*$ and then (\ref{e15}) can not be satisfied. Thus, one and only one $z^j$ must be labeled with $i$ for each $i$. Therefore, $\delta^*$ must be a fully labeled simplex, and so the existence of a fixed point of $f$ implies the existence of a fully labeled simplex.

We have completely proved Sperner's lemma.

\section{Concluding Remarks}

As a future research program we are studying the following themes.
\begin{enumerate}
	\item An application of Brouwer's fixed point theorem for sequentially locally non-constant functions to economic theory and game theory, in particular, the problem of the existence of an equilibrium in a competitive economy with excess demand function with property that is similar to sequential local non-constancy, and the existence of a Nash equilibrium in a strategic game with payoff functions which satisfy the property of sequential local non-constancy.
	\item A generalization of the result of this paper to Kakutani's fixed point theorem for multi-valued functions with property of sequential local non-constancy and its application to economic theory.
\end{enumerate}

\appendix
\section{Proof of Sperner's lemma}\label{app1}
We prove Sperner's lemma by induction about the dimension of $\Delta$. When $n=0$, we have only one point with the number 0. It is the unique 0-dimensional simplex. Therefore the lemma is trivial. When $n=1$, a partitioned 1-dimensional simplex is a segmented line. The endpoints of the line are labeled distinctly, by 0 and 1. Hence in moving from endpoint 0 to endpoint 1 the labeling must switch an odd number of times, that is, an odd number of edges labeled with 0 and 1 may be located in this way.

\begin{figure}[ht!]
\begin{center}
\includegraphics{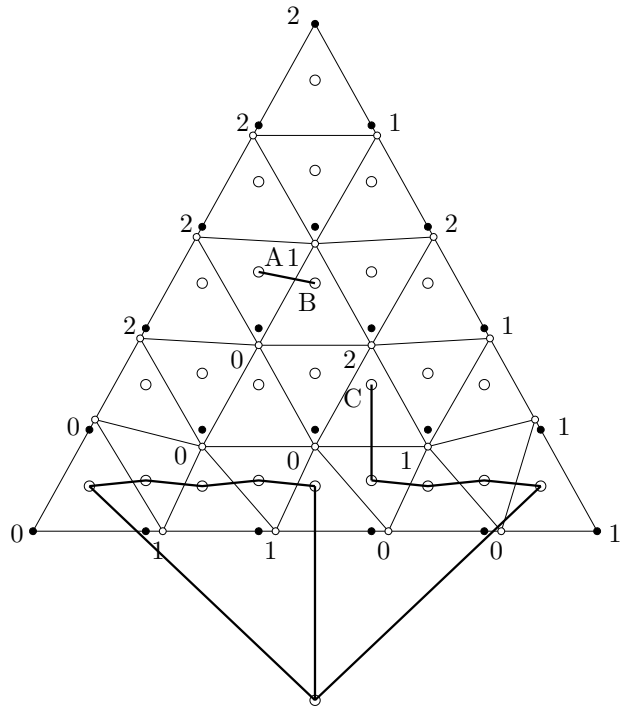}
\end{center}
	\vspace*{-.3cm}
	\caption{Sperner's lemma}
	\label{tria2}
\end{figure}

Next consider the case of 2 dimension. Assume that we have partitioned a 2-dimensional simplex (triangle) $\Delta$ as explained above. Consider the face of $\Delta$ labeled with 0 and 1\footnote{We call edges of triangle $\Delta$ \emph{faces} to distinguish between them and edges of a dual graph which we will consider later.}. It is the base of the triangle in Figure \ref{tria2}. Now we introduce a dual graph that has its nodes in each small triangle of $K$ plus one extra node outside the face of $\Delta$ labeled with 0 and 1 (putting a dot in each small triangle, and one dot outside $\Delta$). We define edges of the graph that connect two nodes if they share a side labeled with 0 and 1. See Figure \ref{tria2}. White circles are nodes of the graph, and thick lines are its edges. Since from the result of 1-dimensional case there are an odd number of faces of $K$ labeled with 0 and 1 contained in the face of $\Delta$ labeled with 0 and 1, there are an odd number of edges which connect the outside node and inside nodes. Thus, the outside node has odd degree. Since by the Handshaking lemma there are an even number of nodes which have odd degree, we have at least one node inside the triangle which has odd degree. Each node of our graph except for the outside node is contained in one of small triangles of $K$. Therefore, if a small triangle of $K$ has one face labeled with 0 and 1, the degree of the node in that triangle is 1: if a small triangle of $K$ has two such faces, the degree of the node in that triangle is 2, and if a small triangle of $K$ has no such face, the degree of the node in that triangle is 0. Thus, if the degree of a node is odd, it must be 1, and then the small triangle which contains this node is labeled with 0, 1 and 2 (fully labeled). In Figure \ref{tria2} triangles which contain one of the nodes $A$, $B$, $C$ are fully labeled triangles.

Now assume that the theorem holds for dimensions up to $n-1$. Assume that we have partitioned an $n$-dimensional simplex $\Delta$. Consider the fully labeled face of $\Delta$ which is a fully labeled $n-1$-dimensional simplex. Again we introduce a dual graph that has its nodes in small $n$-dimensional simplices of $K$ plus one extra node outside the fully labeled face of $\Delta$ (putting a dot in each small $n$-dimensional simplex, and one dot outside $\Delta$). We define the edges of the graph that connect two nodes if they share a face labeled with 0, 1, $\dots$, $n-1$. Since from the result of $n-1$-dimensional case there are an odd number of fully labeled faces of small simplices of $K$ contained in the $n-1$-dimensional fully labeled face of $\Delta$, there are an odd number of edges which connect the outside node and inside nodes. Thus, the outside node has odd degree. Since, by the Handshaking lemma there are an even number of nodes which have odd degree, we have at least one node inside the simplex which has odd degree. Each node of our graph except for the outside node are contained in one of small $n$-dimensional simplices of $K$. Therefore, if a small simplex of $K$ has one fully labeled face, the degree of the node in that simplex is 1: if a small simplex of $K$ has two such faces, the degree of the node in that simplex is 2, and if a small simplex of $K$ has no such face, the degree of the node in that simplex is 0. Thus, if the degree of a node is odd, it must be 1, and then the small simplex which contains this node is fully labeled.

\begin{quote}
If the number (label) of a vertex other than vertices labeled with 0, 1, $\dots$, $n-1$ of an $n$-dimensional simplex which contains a fully labeled $n-1$-dimensional face is $n$, then this $n$-dimensional simplex has one such face, and this simplex is a fully labeled $n$-dimensional simplex. On the other hand, if the number of that vertex is other than $n$, then the $n$-dimensional simplex has two such faces.
\end{quote}

We have completed the proof of Sperner's lemma.

Since $n$ and partition of $\Delta$ are finite, the number of small simplices constructed by partition is also finite. Thus, we can constructively find a fully labeled $n$-dimensional simplex of $K$ through finite steps.


\begin{thebibliography}{9999}

\bibitem{berg} J. Berger, D. Bridges and P. Schuster, ``The Fan Theorem and unique existence of maxima'', \textit{Hournal of Symbolic Logic}, vol. 71, pp. 713-720, 2006.
\bibitem{bb} E. Bishop and D. Bridges, \textit{Constructive Analysis}, Springer, 1985.
\bibitem{br2} D. Bridges, Ishihara H., Schuster, P. and V\^{i}\c{t}\u{a}, L., ``Strong continuity implies uniform sequential continuity'', \textit{Archive for Mathematical Logic}, vol. 44, pp. 887-895, 2005.
\bibitem{bm} D. Bridges and R. Mines, ``Sequentially Continuous Linear Mappings in Constructive Analysis'', \textit{Journal of Symbolic Logic}, vol. 63, pp. 579-583.
\bibitem{br} D. Bridges and F. Richman, \textit{Varieties of Constructive Mathematics}, Cambridge University Press, 1987.
\bibitem{bv} D. Bridges and L. V\^{i}\c{t}\u{a}, \textit{Techniques of Constructive Mathematics}, Springer, 2006.
\bibitem{da} D. van Dalen, ``Brouwer's $\varepsilon$-fixed point from Sperner's lemma'', \textit{Logic Group Preprint Series}, No. 275, 2009.
\bibitem{ishi} H. Ishihara, ``Continuity properties in constructive mathematics'', \textit{Journal of Symbolic Logic}, vol. 57, pp. 557-565, 1992.
\bibitem{su} F. E. Su, ``Rental harmony: Sperner's lemma for fair devision'', \textit{American Mathematical Monthly}, vol. 106, pp. 930-942, 1999.

\bibitem{ta1} Y. Tanaka, ``Constructive proof of Brouwer's fixed point theorem for sequentially locally non-constant functions by Sperner's lemma
'',  arXiv:1103.1776v1.
\bibitem{veld} W. Veldman, ``Brouwer's approximate fixed point theorem is equivalent to Brouwer's fan theorem'', in \textit{Logicism, Intuitionism and Formalism}, edited by Lindstr\"{o}m, S., Palmgren, E., Segerberg, K. and Stoltenberg-Hansen, Springer, 2009.
\bibitem{yoseloff} M. Yoseloff, ``Topological proofs of some combinatorial theorems'', \textit{Journal of Combinatorial Theory (A)}, vol. 17, pp. 95-111, 1974.

\end{thebibliography}
\end{document}